\documentclass[12pt,a4paper]{amsart}
\usepackage[T1]{fontenc}
\usepackage{amssymb}
\usepackage{amsthm}

\newtheorem{theorem}{Theorem}
\newtheorem{lemma}{Lemma}

\newtheorem{remark}{Remark}

\title[Almost everywhere divergence of Ces\`aro means \ldots]{Almost everywhere divergence of Ces\`aro means of subsequences of Walsh--Paley Fourier partial sums}
\date{}

\author[I. Blahota]{Istv\'an Blahota}
\address{I. Blahota, Institute of Mathematics and Computer Sciences, University of Ny\'\i regyh\'aza, H-4400 Ny\'\i regyh\'aza, S\'ost\'oi street 31/b, Hungary}
\email{blahota.istvan@nye.hu}

\author[G. G\'at]{Gy\"orgy G\'at}
\address{G. G\'at, Institute of Mathematics, University of Debrecen, Pf. 400, 4002 Debrecen, Hungary}
\email{gat.gyorgy@science.unideb.hu}
\subjclass{42C10}

\keywords{Walsh--Fourier series, subsequences of partial sums, Ces\`aro means, almost everywhere divergence, Walsh--Paley system.}
\thanks{The second author was supported  by the University of Debrecen Program for Scientific Publication.}

\begin{document}

\begin{abstract}
	We prove an almost everywhere divergence theorem for Ces\`aro means of subsequences of partial sums of Walsh--Fourier series. More precisely, we show that	there exist a strictly increasing sequence of positive integers $(a_n)$ and a function $f\in L^1(G)$, where $G$ denotes the dyadic group, such that
	\[
	\left(
	\frac1N\sum_{n=1}^N S_{a_n}f(x)
	\right)_{N=1}^{\infty}
	\]
	does not converge for almost every $x\in G$.
	
	The result is motivated by the corresponding trigonometric problem. 
	
	A problem going back to Zalcwasser \cite{Zalcwasser1936} and remaining open for almost
	ninety years was recently solved in the trigonometric setting by G\'at \cite{GatTrigSubseqDivergence}, who proved almost everywhere divergence for	suitable subsequential arithmetic means of partial sums.
	
	In the Walsh--Paley setting the situation is more delicate. The dyadic structure gives positive convergence results; for instance, the partial	sums $S_{2^m}f$ converge almost everywhere to $f$ for every $f\in L^1(G)$. Our theorem shows that, despite this stabilizing dyadic structure, suitably chosen arithmetic means of Walsh--Fourier partial sums may still diverge almost everywhere.
\end{abstract}
	
\maketitle

\section{Introduction}

A central problem in Fourier analysis is the reconstruction of a function from the partial sums of its Fourier series. In the trigonometric case, the classical theorem of Carleson, later extended by Hunt, asserts the almost everywhere convergence of the partial sums for functions in $L^p$, $p>1$ \cite{Carleson1966,Hunt1968}. On the other hand, Kolmogorov's examples show
that this assertion cannot be extended to the whole space $L^1$: there exists an integrable function whose trigonometric Fourier series diverges almost everywhere, and even an everywhere divergent Fourier series \cite{Kolmogorov1923,Kolmogorov1926}.

The situation is even more delicate if one considers only a subsequence of the partial sums. In the trigonometric setting this question has a very strong negative aspect. Gosselin proved that for every subsequence $(n_j)$ of the positive integers there exists an $f\in L^1(\mathbb T)$ such that
\[
	\sup_{j\in\mathbb{P}} |S_{n_j}f|=+\infty
\]
almost everywhere. Totik later strengthened this result by obtaining divergence everywhere \cite{Gosselin1958,Totik1982}. Thus, in the trigonometric case, no prescribed subsequence of partial sums can guarantee pointwise convergence for all integrable functions.

Since ordinary convergence of partial sums may fail in $L^1$, it is natural to consider arithmetic means. The classical theorems of Fej\'er and Lebesgue show that the Ces\`aro means
\[
\sigma_N f=\frac1N\sum_{k=1}^N S_kf
\]
converge almost everywhere to $f$ for every $f\in L^1$ in the trigonometric case \cite{Fejer1904,Lebesgue1905}. This leads to a more subtle question, going back to Zalcwasser, namely what can be said about arithmetic means formed from a subsequence of partial sums,
\[
	\frac1N\sum_{j=1}^N S_{n_j}f .
\]

Zalcwasser proved in 1936 that for $n_j=j^2$ these means converge almost everywhere to $f$ for every $f\in L^1(\mathbb T)$ \cite{Zalcwasser1936}. Salem mentioned related extensions to $j^3$ and $j^4$, while Belinsky later constructed sequences of substantially faster growth for which the same almost everywhere convergence still holds \cite{Salem1955,Belinsky1997}. Thus Zalcwasser's result naturally led to the problem of whether such subsequential arithmetic means converge almost everywhere for every integrable function under more general assumptions on the subsequence.

For continuous functions and uniform convergence, the corresponding trigonometric problem is essentially understood. In the convex case, the condition
\[
	\sup_{j\in\mathbb{P}} j^{-1/2}\log n_j<+\infty
\]
is necessary and sufficient for the uniform convergence of the corresponding arithmetic means for every continuous function \cite{Carleson1983}; see also \cite{Belinsky1984,KahaneKatznelson1983,Salem1955}. For $L^1$ functions and almost everywhere convergence, however, the picture is more complicated. The problem, going back to Zalcwasser and remaining open for almost ninety years, was recently solved in the negative by G\'at: there exist a subsequence $(k_j)$ and an integrable function $f$ such that
\[
\frac1N\sum_{j=1}^N S_{k_j}f
\]
does not converge to $f$ almost everywhere \cite{GatTrigSubseqDivergence}. Moreover, for convex sequences the corresponding super $(C,1)$-summability property is characterized by almost lacunarity \cite{GatTrigSubseqDivergence}. The positive lacunary convergence result used in this context goes back to \cite{Gat2019}.

The Walsh--Paley system exhibits a substantially different behaviour. The Walsh analogue of Carleson's theorem was proved by Billard, while Fine proved the Walsh--Fej\'er--Lebesgue theorem: for every $f\in L^1(G)$ the Walsh--Ces\`aro means converge almost everywhere to $f$ \cite{Billard1967,Fine}. This difference is especially visible for subsequences of partial sums. The partial sums
\[
	S_{2^m}f
\]
are the conditional expectations of $f$ with respect to the 
\[
	\mathcal F_m:=\sigma\{I_m(x):x\in G\},
\]
generated by the dyadic intervals of order $m$. Hence, by the martingale convergence theorem, for every $f\in L^1(G)$,
\[
	S_{2^m}f(x)\to f(x)\quad \text{for a.e. }x\in G.
\]
Thus the Walsh--Paley system has natural subsequences along which the partial sums themselves converge almost everywhere for every integrable function. This is in sharp contrast with the trigonometric theorem of Gosselin and Totik.

A more general positive result is formulated in terms of the dyadic variation of the index. It is known that, for every increasing sequence $(a(n))$ satisfying
\[
	\sup_{n\in\mathbb{P}} V(a(n))<+\infty,
\]
one has
\[
	S_{a(n)}f(x)\to f(x)\quad \text{for a.e. }x
\]
for every $f\in L^1(G)$. The bounded-variation condition is not necessary: Konyagin answered a question of Balashov negatively by constructing a sequence $(a(n))$ with unbounded dyadic variation for which the above almost everywhere convergence still holds for every integrable function \cite{Konyagin1993}. These results show that the Walsh--Paley theory is not a direct translation of the trigonometric one; the binary structure may force convergence along
subsequences which have no comparable trigonometric analogue.

Arithmetic means of Walsh--Fourier partial sums along subsequences have also been studied. G\'at proved almost everywhere convergence results for Ces\`aro and logarithmic means of subsequences of Walsh--Fourier partial sums, including lacunary subsequences \cite{Gat2010}. Later it was shown that if the sequence $a$ satisfies
\[
	a(n+1)\ge\left(1+\frac1{(n+1)^\delta}\right)a(n),\qquad 0<\delta<\frac12,
\]
then
\[
	\frac1N\sum_{n=1}^N S_{a(n)}f(x)\to f(x)\quad \text{for a.e. }x
\]
for every $f\in L^1(G)$ \cite{Gat2019Walsh}. See also \cite{Gat2019}.

The purpose of the present paper is to complement these positive Walsh--Paley results by a divergence theorem for Ces\`aro means of a suitably chosen subsequence of Walsh--Fourier partial sums. We prove that there exist a strictly increasing sequence of positive integers $(a_n)$ and a function $f\in L^1(G)$ such that
\[
	\left(\frac1N\sum_{n=1}^N S_{a_n}f(x)\right)_{N=1}^{\infty}
\]
diverges for almost every $x\in G$. This result is significant precisely because of the stabilizing dyadic structure of the Walsh--Paley system. Unlike in the trigonometric case, many subsequences, for instance $(2^m)$ and more generally bounded-variation subsequences, yield almost everywhere convergence of the partial sums themselves for every integrable function. The theorem therefore shows that, despite these positive dyadic phenomena, Ces\`aro means of carefully selected Walsh--Fourier partial sums may still diverge almost everywhere.

\section{Notation and auxiliary facts}

Let $\mathbb P$ be the set of positive integers and let
$\mathbb N:=\mathbb P\cup\{0\}$. Let
\[
	G:=\prod_{k=0}^{\infty}\mathbb Z_2
\]
be the dyadic group, endowed with the product topology and the normalized Haar measure $\mu$. Addition on $G$ is coordinate-wise modulo $2$. For $x=(x_0,x_1,\ldots)\in G$, the dyadic interval is
\[
	I_0(x):=G,\qquad I_n(x):=\{y\in G:y=(x_0,\ldots,x_{n-1},y_n,y_{n+1},\ldots)\},
\]
where $n\in\mathbb P$. We write $I_n:=I_n(0)$.

The Rademacher functions are
\[
	r_n(x):=(-1)^{x_n}\qquad (x\in G, n\in\mathbb N).
\]
If
\[
	n=\sum_{k=0}^{\infty} n_k2^k,\qquad n_k\in\{0,1\},
\]
is the binary expansion of $n$, then the Walsh--Paley functions are
\[
	w_0(x):=1,\qquad w_n(x):=\prod_{k=0}^{\infty} r_k^{n_k}(x)=(-1)^{\sum_{k=0}^{|n|} n_kx_k}.
\]
The order and variation of $n$ are
\[
	|0|:=0,\textrm{ if } n\in\mathbb{P}\textrm{, then } |n|:=\max\{j\in\mathbb N:n_j\ne0\},\qquad V(n):=\sum_{k=1}^{\infty}|n_k-n_{k-1}|+n_0.
\]
For $f\in L^1(G)$, the Walsh--Fourier coefficients, partial sums and Dirichlet
kernels are
\[
	\widehat f(j):=\int_G f(x)w_j(x)\,d\mu(x),\qquad S_k(f):=\sum_{j=0}^{k-1}\widehat f(j)w_j,\qquad D_n:=\sum_{k=0}^{n-1}w_k.
\]
The Ces\`aro means and kernels are denoted by
\[
	\sigma_n(f):=\frac1n\sum_{k=1}^{n}S_k(f),\qquad K_n:=\frac1n\sum_{k=1}^{n}D_k.
\]

It is known that 
\[
	\sigma_n(f;x)=\int_G f(x+t)K_n(t)\,d\mu(t)
\]
and
\begin{equation}\label{K2np}
	K_{2^n}(x)\geq 0
\end{equation}
for every $n\in\mathbb{N}$ and $x\in G.$

\begin{theorem}[\cite{Fine}]\label{Fine_th}
If
$f\in L^1(G)$, then
\[
	\sigma_n(f;x)\to f(x) \quad \text{for a.e. } x\in G .
\]
\end{theorem}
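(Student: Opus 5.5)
The plan is the standard route: first show that the maximal operator $\sigma^*f:=\sup_n|\sigma_n(f)|$ is of weak type $(1,1)$, then use density of Walsh polynomials in $L^1(G)$. For a Walsh polynomial $P$ of degree less than $2^m$ we have $S_kP=P$ for $k\ge 2^m$. Hence $\sigma_nP-P=\frac1n\sum_{k<2^m}(S_kP-P)$, which tends to $0$ uniformly. So almost everywhere convergence holds on a dense class, and the weak type inequality extends it to all of $L^1(G)$ in the usual way. For $f\in L^1$ and $\varepsilon>0$, choose $P$ with $\|f-P\|_1<\varepsilon$. Then
\[
\mu\bigl(\limsup_n|\sigma_nf-f|>2\lambda\bigr)\le \mu(\sigma^*(f-P)>\lambda)+\mu(|f-P|>\lambda)\le C\varepsilon/\lambda .
\]

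The weak type bound will be reduced to the dyadic kernels $K_{2^n}$. Write $n=\sum_j n_j2^j$ with $|n|=s$. The known decomposition of $nK_n$ expresses it as a sum over the set bits $j$ of terms of the form
\[
2^jK_{2^j}\, w_{n^{(j+1)}}\quad\text{and}\quad n^{(j+1)}\,D_{2^j},
\]
possibly multiplied by $r_j$ and by Walsh functions with harmless index, where $n^{(j)}:=\sum_{i\ge j}n_i2^i$. I would derive this from $D_{2^j+k}=D_{2^j}+r_jD_k$ for $k<2^j$ and $\sum_{k<2^j}D_k=2^jK_{2^j}$. From the decomposition one obtains the pointwise estimate
\[
|K_n|\le \frac{C}{n}\sum_{j\le s}2^j\bigl(K_{2^j}+D_{2^j}\bigr)\cdot\frac{n^{(j)}}{2^j}\cdot 2^{j-s},
\]
or, more conveniently, the classical bound
\[
\sup_n|\sigma_nf|\le C\sup_s \frac1{2^s}\sum_{j\le s}2^j\,\bigl|f\bigr|*K_{2^j}\bigl(\cdot\bigr)+C\sup_j|f|*D_{2^j} .
\]
This uses the nonnegativity \eqref{K2np}, together with the explicit formula for $K_{2^j}$:
\[
K_{2^j}(x)=\frac12\Bigl(2^{-j}\sum_{i<j}2^iD_{2^j}(x+e_i)+D_{2^j}(x)\Bigr).
\]
Here $e_i$ is the $i$th unit vector.

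With this formula, $|f|*K_{2^j}$ is controlled by averages of $|f|$ over the translated intervals $I_j(x+e_i)$ with weight $2^{i-j}$. The resulting maximal operator is
\[
Tf(x)=\sup_j\sum_{i<j}2^{i-j}\,2^j\!\int_{I_j(x+e_i)}|f|.
\]
I would prove that $T$ is of weak type $(1,1)$ by a Calderón--Zygmund decomposition at level $\lambda$ with respect to the dyadic filtration. The good part is bounded, so the $L^2$ bound of $T$, which holds because each piece is a sum of translated conditional expectations with summable weights, handles it. For the bad part $b=\sum b_k$ supported on maximal dyadic intervals $J_k$ of order $m_k$, each $b_k$ has mean zero. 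Therefore $b_k*D_{2^j}=0$ for $j\le m_k$, and the translated averages vanish unless the translate meets $J_k$. Outside the enlarged set $\bigcup_k\bigcup_{i<m_k}(J_k+e_i)\cup J_k$, one bounds $\int_{G\setminus \text{enlarged}}Tb_k$ by $C\|b_k\|_1$ through the geometric factor $2^{i-j}$. The enlarged set has measure at most $\sum_k m_k\mu(J_k)$.

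The main obstacle is this last step: the naive enlargement by all translates $J_k+e_i$, $i<m_k$, costs a factor $m_k$, which destroys the weak type estimate. I would first try to avoid it by keeping the weights. A point $x\in J_k+e_i$ only feels $b_k$ with weight $2^{i-m_k}$, so I would estimate $\int_{J_k+e_i}T b_k\le C2^{i-m_k}\|b_k\|_1$ instead of removing these sets. Summing over $i<m_k$ then gives a geometric series, bounded by $C\|b_k\|_1$, and only $J_k$ itself needs to be excluded. Combining this with Chebyshev's inequality yields $\mu(Tf>\lambda)\le C\|f\|_1/\lambda$. The weak type of $\sigma^*$ then follows, and with it Theorem \ref{Fine_th}.
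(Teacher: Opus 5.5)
The paper gives no proof of this statement. It is quoted from Fine, so there is no in-paper argument to compare with. Your outline is the standard maximal-inequality route going back to Schipp, not Fine's original argument: weak type $(1,1)$ of $\sigma^*$, density of Walsh polynomials, the reduction $nK_n=2^sK_{2^s}+mD_{2^s}+r_s\,mK_m$ for $n=2^s+m$, $0\le m<2^s$, and the explicit formula for $K_{2^j}$. That architecture is sound. However, the key Calder\'on--Zygmund step fails as you have written it.

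\textbf{The absolute value kills the cancellation.} Your $T$ has $|f|$ inside the averages, so $Tb_k=T|b_k|$ and the mean zero of $b_k$ is invisible to it. The claims ``$b_k*D_{2^j}=0$ for $j\le m_k$'' and ``the translated averages vanish unless the translate meets $J_k$'' are false for this $T$.

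Concretely, let $J_k=I_{m_k}(u)$ and let $x$ first differ from $u$ in coordinate $i<m_k$. Take $j=i+1$ and the translate by $e_i$. Then $I_{i+1}(x+e_i)=I_{i+1}(u)\supseteq J_k$, so $Tb_k(x)\ge 2^i\|b_k\|_1$ on a set of measure $2^{-i-1}$. Hence $\int_{G\setminus J_k}Tb_k\ge \frac{m_k}{2}\|b_k\|_1$. This is exactly the logarithmic loss you want to avoid, and it comes mostly from points outside $\bigcup_{i<m_k}(J_k+e_i)$, where you claim $Tb_k$ is harmless.

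\textbf{The fix.} Put $\mathbb E_jh(y):=2^j\int_{I_j(y)}h\,d\mu$ and use the signed sublinear operator $\widetilde Tf(x):=\sup_j\sum_{i<j}2^{i-j}\bigl|\mathbb E_jf(x+e_i)\bigr|$. Run the Calder\'on--Zygmund argument for $\widetilde T$ on the signed decomposition $f=g+\sum_k b_k$. For $\widetilde T$ your computation is correct:
\begin{itemize}
\item $\mathbb E_jb_k\equiv 0$ for $j\le m_k$.
\item For $j>m_k$, $\widetilde Tb_k$ vanishes off $J_k\cup\bigcup_{i<m_k}(J_k+e_i)$.
\item On $J_k+e_i$ only the $i$-th term survives, and $2^{i-j}|\mathbb E_jb_k|\le 2^i\|b_k\|_1$.
\end{itemize}
Therefore $\int_{G\setminus J_k}\widetilde Tb_k\le\sum_{i<m_k}2^{i-m_k}\|b_k\|_1\le\|b_k\|_1$. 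The weak type of $\widetilde T$ then transfers to $T$, because $Tf=\widetilde T|f|$ and $\||f|\|_1=\|f\|_1$.

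\textbf{An alternative that keeps $|f|$.} You can also avoid cancellation entirely. Set $M_lf(x):=\sup_{j\ge l}\mathbb E_j|f|(x+e_{j-l})$; then $Tf\le\sum_{l\ge1}2^{-l}M_lf$. Every interval of order $j$ with $|f|$-average above $\lambda$ lies in some maximal $J_k$ with $m_k\le j$. Hence $\{M_lf>\lambda\}\subseteq\bigcup_k\bigl(J_k\cup\bigcup_{m_k-l\le i<m_k}(J_k+e_i)\bigr)$, so $\mu(M_lf>\lambda)\le(l+1)\|f\|_1/\lambda$. Splitting $\lambda$ proportionally to $2^{-l/2}$ sums these bounds.

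\textbf{Two smaller points.}
\begin{itemize}
\item For the good part you neither need nor have justified an $L^2$ bound. Writing $i=j-l$, the translation $e_{j-l}$ moves with $j$ inside the supremum, so the pieces are not Doob maximal functions of fixed translates of $f$. It is enough that $\sum_{i<j}2^{i-j}<1$, which gives $\widetilde Tg\le\|g\|_\infty\le 2\lambda$ pointwise.
\item With the paper's normalization $K_n=\frac1n\sum_{k=1}^nD_k$, the exact identity is $K_{2^j}=\frac12\bigl((1+2^{-j})D_{2^j}+\sum_{i<j}2^{i-j}D_{2^j}(\cdot+e_i)\bigr)$. The term $2^{-j-1}D_{2^j}$ you dropped is harmless, since it is dominated by the dyadic maximal function. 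Your first displayed pointwise bound for $|K_n|$ is garbled. The ``classical bound'' you state next is nevertheless correct, since iterating the recursion gives $|nK_n|\le\sum_{j\le s}2^j(K_{2^j}+D_{2^j})$.
\end{itemize}
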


\begin{lemma}[\cite{SWSP}]\label{V8}
If $n\in\mathbb N$, then
\[
	\frac{V(n)}8\le \|D_n\|_1\le V(n).
\]
\end{lemma}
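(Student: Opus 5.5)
The statement to prove is Lemma~\ref{V8}: for every $n\in\mathbb N$ we have $V(n)/8\le\|D_n\|_1\le V(n)$. I would base both bounds on the standard expansion of $D_n$ into dyadic blocks. Write $n=\sum_{k} n_k 2^k$ and recall the Paley formula
\[
D_{2^k}(x)=2^k\mathbf 1_{I_k}(x),\qquad \|D_{2^k}\|_1=1 .
\]
I would also use the decomposition
\[
D_n=w_n\sum_{k=0}^{\infty} n_k\, r_k\, D_{2^k},
\]
which follows by induction on the binary digits. It comes from $D_{2^k+m}=D_{2^k}+r_kD_m$ for $m<2^k$, together with $w_n D_{2^k}$ restricted to $I_k$.

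\textbf{Upper bound.} Grouping consecutive runs of ones, a block of ones in positions $a\le k<b$ contributes $D_{2^b}-D_{2^a}$ up to a unimodular Walsh factor. This is because $\sum_{a\le k<b} r_kD_{2^k}$, multiplied by a suitable Walsh function, telescopes to a difference of two dyadic kernels. Each block therefore has $L^1$ norm at most $2$. The number of blocks is half the number of transitions $|n_k-n_{k-1}|$, with the convention at $k=0$ accounted for by $n_0$. Summing gives $\|D_n\|_1\le V(n)$. The bookkeeping at the ends is where I expect the constant $1$ to come out exactly; if it does not, a slightly cruder constant is acceptable for the paper's purposes.

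\textbf{Lower bound.} I would localize to the rings $I_k\setminus I_{k+1}$. On $I_k\setminus I_{k+1}$ only the terms with index at most $k$ survive, and there
\[
D_n = w_n\Bigl(\sum_{j<k} n_j r_j 2^j - n_k 2^k\Bigr)\ \text{(up to sign conventions)}.
\]
Hence $|D_n|\ge \bigl|\,n_k2^k-\sum_{j<k}n_j2^j\,\bigr|$ on that ring. At each transition index $k$ where $n_k\ne n_{k-1}$, this quantity is $\gtrsim 2^{k}$ times a constant. The ring has measure $2^{-k-1}$, so each transition contributes at least a fixed constant to the integral. Summing over transitions, the rings being disjoint, gives $\|D_n\|_1\ge c\,V(n)$.

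\textbf{Main obstacle.} The main difficulty is getting the constant $1/8$. A transition from $1$ to $0$ versus from $0$ to $1$ gives different lower bounds on the ring, of size $2^{k-1}$ versus roughly $2^{k}-2^{k-1}$. When several transitions are adjacent I would pair them, using the neighbouring ring, so that each transition is charged at least $1/8$. Since this is the cited result from \cite{SWSP}, I would follow that argument rather than optimize further.
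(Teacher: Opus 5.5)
The paper gives no proof of this lemma; it only quotes it from \cite{SWSP}. Your argument is the standard one and it is correct: the decomposition $D_n=w_n\sum_k n_kr_kD_{2^k}$, telescoping via $r_kD_{2^k}=D_{2^{k+1}}-D_{2^k}$ for the upper bound, and localization to the rings $I_k\setminus I_{k+1}$ for the lower bound.

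\textbf{Upper bound.} The constant comes out exactly $1$ with no end-bookkeeping. The telescoping gives directly $D_n=w_n\sum_{k\ge0}(n_{k-1}-n_k)D_{2^k}$ with $n_{-1}:=0$. Since $\|D_{2^k}\|_1=1$, this yields $\|D_n\|_1\le V(n)$.

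\textbf{Lower bound.} The ``main obstacle'' you describe is not really there, and the pairing of adjacent transitions is unnecessary. Put $m_k:=\sum_{j<k}n_j2^j$, so that $|D_n|=|n_k2^k-m_k|$ on $I_k\setminus I_{k+1}$. The two kinds of transition index $k\ge1$ both give a uniform bound:
\begin{itemize}
\item if $(n_{k-1},n_k)=(0,1)$, then $m_k\le 2^{k-1}-1$, so $2^k-m_k>2^{k-1}$;
\item if $(n_{k-1},n_k)=(1,0)$, then $m_k\ge 2^{k-1}$.
\end{itemize}
In either case the ring contributes at least $2^{-k-1}\cdot 2^{k-1}=\tfrac14$. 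The index $k=0$ with $n_0=1$ contributes $\tfrac12$.

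Distinct transition indices already correspond to distinct, disjoint rings. Summing therefore gives $\|D_n\|_1\ge V(n)/4$, which is stronger than the stated $V(n)/8$. You should write out this one-line inequality explicitly rather than leaving it as ``$\gtrsim 2^k$ times a constant''.
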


\begin{lemma}[\cite{Paley}]\label{paley}
For every $n\in\mathbb N$ and $x\in G$,
\[
	D_{2^n}(x)=
	\begin{cases}
		2^n, & x\in I_n,\\
		0, & x\notin I_n.
	\end{cases}
\]
\end{lemma}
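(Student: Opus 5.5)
The natural route is to write $D_{2^n}$ as a product of Rademacher factors and then evaluate each factor separately.

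First, for $0\le j<2^n$ the binary expansion of $j$ involves only the digits $j_0,\ldots,j_{n-1}$. By the definition of the Walsh--Paley functions,
\[
w_j=\prod_{k=0}^{n-1} r_k^{j_k}.
\]
As $j$ runs through $0,1,\ldots,2^n-1$, the vector $(j_0,\ldots,j_{n-1})$ runs through every element of $\{0,1\}^n$ exactly once. Expanding the product $\prod_{k=0}^{n-1}(1+r_k)$ by distributivity produces exactly these $2^n$ terms, each once. Hence
\[
D_{2^n}(x)=\sum_{j=0}^{2^n-1}w_j(x)=\prod_{k=0}^{n-1}\bigl(1+r_k(x)\bigr)\qquad (x\in G).
\]
If one wants to avoid the bookkeeping of the expansion, the same identity follows by induction on $n$. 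For $2^n\le j<2^{n+1}$ we have $w_j=r_nw_{j-2^n}$, so $D_{2^{n+1}}=(1+r_n)D_{2^n}$, and the case $n=0$ is simply $D_1=w_0=1$.

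Second, each factor is easy to evaluate. Since $r_k(x)=(-1)^{x_k}$, the factor $1+r_k(x)$ equals $2$ when $x_k=0$ and $0$ when $x_k=1$. The product is therefore $2^n$ exactly when $x_0=\cdots=x_{n-1}=0$, that is, when $x\in I_n$, and it is $0$ otherwise. For $n=0$ the product is empty and $I_0=G$, so $D_1\equiv1$ is consistent with the claim.

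The only point needing any care is the combinatorial identification of the sum with the product, i.e.\ the bijection between $\{0,\ldots,2^n-1\}$ and $\{0,1\}^n$ given by binary digits. The inductive form disposes of this cleanly, so I expect no real obstacle.
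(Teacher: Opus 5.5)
Your proof is correct and complete. The factorization $D_{2^n}=\prod_{k=0}^{n-1}(1+r_k)$, obtained from the binary-digit bijection or from the recursion $D_{2^{n+1}}=(1+r_n)D_{2^n}$, combined with $1+r_k(x)\in\{0,2\}$ according to whether $x_k=0$ or $x_k=1$, is the standard textbook argument. The paper gives no proof of its own and simply cites the lemma from Paley.
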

Let $L$ be an odd positive integer and let $A\in\mathbb N$, $L<A$. Define
\[
	n_{(A,L)}:=\sum_{s=0}^{(L-1)/2}2^{A-2s-1}=2^{A-L}+2^{A-L+2}+\cdots+2^{A-3}+2^{A-1}.
\]
Let $e_j=(0,\dots,0,1,0,\dots)\in G$ denote the element whose only non-zero coordinate is the $j$-th one, and put
\[
	U_{A,L}:=\left\{u\in G:u=\sum_{j=0}^{A-L-1}e_j u_j\right\},\qquad\widetilde U_{A,L}:=\left\{u\in G:u=\sum_{j=A-L}^{A-1}e_j u_j\right\}.
\]
Write $\widetilde U_{A,L}=\{v_0,\ldots,v_{2^L-1}\}$. Define
\[
	g_{A,L}(x):=\sum_{u\in U_{A,L}}\operatorname{sgn}D_{n_{(A,L)}}(x+u),
\]
\[
	R_{k,A,L}(x):=r_{A+k}(x)g_{A,L}(x+v_k)\qquad (0\le k<2^L),
\]
and
\[
	Q_{A,L}:=\prod_{k=0}^{2^L-1}(1+R_{k,A,L}).
\]

We use $c>0$ for an absolute constant, which may change from line to line.

\begin{lemma}[\cite{BGcd}]\label{bg10}	
If $x\in U_{A,L}$, then there exists an absolute constant $c>0$ such that
\[
	S_{n_{(A,L)}}(g_{A,L};x)\ge cL.
\]
\end{lemma}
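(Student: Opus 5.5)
The plan is to compute $S_{n_{(A,L)}}(g_{A,L};x)$ exactly, using the multiplicative structure of $D_{n_{(A,L)}}$. Write $m:=A-L$ and $n:=n_{(A,L)}=2^m n'$, where
\[
n':=\sum_{s=0}^{(L-1)/2}2^{L-1-2s}=1+2^2+\cdots+2^{L-1}.
\]
Let $T$ denote the shift $(Tt)_j:=t_{j+m}$. I will use the standard identity
\[
D_{2^m j}(t)=D_{2^m}(t)\,D_j(Tt)\qquad (j\in\mathbb N),
\]
which holds because $w_{2^m i+l}(t)=w_l(t)\,w_i(Tt)$ for $0\le l<2^m$. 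Combined with Lemma \ref{paley}, this gives
\[
D_n(t)=2^m\mathbf 1_{I_m}(t)\,D_{n'}(Tt).
\]
Since $T(t+u)=Tt$ for $u\in U_{A,L}$, it follows that $\operatorname{sgn}D_n(t)=\mathbf 1_{I_m}(t)\operatorname{sgn}D_{n'}(Tt)$.

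Next, expand the partial sum as a convolution and shift each term by $u$:
\[
S_n(g_{A,L};x)=\sum_{u\in U_{A,L}}\int_G \operatorname{sgn}D_n(t+u)\,D_n(x+t)\,d\mu(t)=\int_G \operatorname{sgn}D_n(t)\sum_{u\in U_{A,L}}D_n(x+u+t)\,d\mu(t).
\]
Because $x\in U_{A,L}$ and $U_{A,L}$ is a group, the elements $x+u$ run over all of $U_{A,L}$ as $u$ does. The inner sum is therefore $\sum_{u\in U_{A,L}}2^m\mathbf 1_{I_m}(t+u)D_{n'}(Tt)$. For each $t$ exactly one $u\in U_{A,L}$ satisfies $t+u\in I_m$, namely the one matching the first $m$ coordinates of $t$. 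Hence the inner sum equals $2^mD_{n'}(Tt)$, and
\[
S_n(g_{A,L};x)=2^m\int_{I_m}\operatorname{sgn}D_{n'}(Tt)\,D_{n'}(Tt)\,d\mu(t)=2^m\int_{I_m}|D_{n'}(Tt)|\,d\mu(t)=\|D_{n'}\|_1,
\]
since $T$ maps $I_m$ onto $G$ and carries the normalized measure on $I_m$ to $\mu$.

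Finally, apply Lemma \ref{V8}. The binary digits of $n'$ are $1,0,1,0,\dots,0,1$ in positions $0,\dots,L-1$. Thus $n'_0=1$, and there are $L-1$ digit changes between positions $0$ and $L-1$, so $V(n')=L$. Therefore $S_n(g_{A,L};x)\ge L/8$, which gives the claim with $c=1/8$.

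The only delicate point is the factorization $D_{2^m n'}=D_{2^m}\cdot D_{n'}\circ T$ together with the observation that $\operatorname{sgn}D_n$ is invariant under translation by $U_{A,L}$. The rest is bookkeeping, including the harmless convention $\operatorname{sgn}0=0$ on the zero set of $D_n$.
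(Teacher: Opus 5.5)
Your argument is correct and follows essentially the same route as the paper (which cites \cite{BGcd} for this lemma and sketches the idea in the proof of Lemma \ref{A}): for $x\in U_{A,L}$ you identify $S_{n_{(A,L)}}(g_{A,L};x)$ with $\|D_{n'}\|_1=\|D_{n_{(A,L)}}\|_1$ and then apply Lemma \ref{V8} to the integer with alternating binary digits. One harmless slip: the transition $n'_{L-1}=1\to n'_L=0$ also counts in the variation, so $V(n')=L+1$ rather than $L$, which only improves your constant.
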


\begin{lemma}[\cite{BGcd}]\label{bg12}
For every odd positive integer $L$ and every $A\in\mathbb N$, $L<A$,
\[
	Q_{A,L}\ge0,\qquad\|Q_{A,L}\|_1=1.
\]
\end{lemma}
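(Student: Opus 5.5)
The plan is to first show that $g_{A,L}$ only takes the values $-1$, $0$, $1$. That gives $|R_{k,A,L}|\le 1$, so every factor $1+R_{k,A,L}$ of $Q_{A,L}$ is nonnegative. The value of the integral then comes from expanding the product and using independence of the Rademacher functions.

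\emph{Step 1: $|g_{A,L}|\le 1$.} Put $n=n_{(A,L)}$. Its lowest binary digit sits at position $A-L$, so $n=2^{A-L}m$ with $m$ an integer. Splitting the sum defining $D_n$ by residues modulo $2^{A-L}$, we get the standard factorization
\[
D_n(x)=D_{2^{A-L}}(x)\sum_{j=0}^{m-1}w_{j2^{A-L}}(x).
\]
By Lemma~\ref{paley}, $D_{2^{A-L}}(x+u)\ne0$ only if $x+u\in I_{A-L}$, that is, only if $u_j=x_j$ for $0\le j<A-L$. Among the elements $u\in U_{A,L}$ exactly one, say $u^*$, satisfies this. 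Hence the sum defining $g_{A,L}(x)$ has at most one nonzero term, and
\[
g_{A,L}(x)=\operatorname{sgn}D_n(x+u^*)\in\{-1,0,1\}.
\]
Consequently $|R_{k,A,L}(x)|\le |g_{A,L}(x+v_k)|\le1$ for every $x$. Thus $1+R_{k,A,L}\ge0$, and the product satisfies $Q_{A,L}\ge0$.

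\emph{Step 2: $\int_G Q_{A,L}\,d\mu=1$.} Every $w_j$ with $j<2^A$ depends only on the coordinates $x_0,\dots,x_{A-1}$, and $n<2^A$. So $D_n$ is $\mathcal F_A$-measurable, and hence so are $g_{A,L}$ and each translate $g_{A,L}(\cdot+v_k)$. Expanding the product gives
\[
Q_{A,L}=\sum_{S\subseteq\{0,\dots,2^L-1\}}\prod_{k\in S}R_{k,A,L}.
\]
For $S\neq\emptyset$ the corresponding term equals
\[
\prod_{k\in S}r_{A+k}\cdot\prod_{k\in S}g_{A,L}(\cdot+v_k).
\]
The second factor depends only on $x_0,\dots,x_{A-1}$. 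The first factor is a product of distinct Rademacher functions in the coordinates $x_{A+k}$, $k\in S$, all of index $\ge A$. By the product structure of $\mu$, the integral factorizes, and each $\int r_{A+k}\,d\mu=0$, so every such term has integral $0$.

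Only the term $S=\emptyset$ survives, so $\int_G Q_{A,L}\,d\mu=1$. Combined with $Q_{A,L}\ge0$ from Step 1, this gives $\|Q_{A,L}\|_1=1$.

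The only genuinely nontrivial point is Step 1, namely that the sum over $U_{A,L}$ collapses to a single sign. This depends on the lowest binary digit of $n_{(A,L)}$ being exactly at position $A-L$, which the definition guarantees.
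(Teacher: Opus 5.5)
Your proof is correct and complete. The paper does not reprove this lemma but cites \cite{BGcd}, and your argument is the standard Riesz-product proof: the translates $D_{n_{(A,L)}}(\cdot+u)$, $u\in U_{A,L}$, are supported on pairwise disjoint cosets of $I_{A-L}$, so $g_{A,L}\in\{-1,0,1\}$ and every factor $1+R_{k,A,L}$ is nonnegative, while each nonempty product $\prod_{k\in S}R_{k,A,L}$ is a product of Rademacher functions of index $\ge A$ times an $\mathcal F_A$-measurable function and therefore integrates to zero. One point is worth stating explicitly: you use the convention $\operatorname{sgn}0=0$, and the bound $|g_{A,L}|\le1$ depends on it.
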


\begin{lemma}[\cite{BGcd}]\label{bg13}
For every $x\in G$ there exists
\[
	k\in\{0,\ldots,2^L-1\}
\]
such that
\[
	S_{2^{A+k}+n_{(A,L)}}(Q_{A,L};x)-S_{2^{A+k}}(Q_{A,L};x)\ge cL.
\]
\end{lemma}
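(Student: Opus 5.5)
The plan is to compute the block $S_{2^{A+k}+n}(Q_{A,L})-S_{2^{A+k}}(Q_{A,L})$ exactly, where $n:=n_{(A,L)}$, and then choose $k$ from $x$. I can prove everything except the sign of the resulting term, and I say below where that gap is.

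\emph{Step 1: spectral structure.} Each $D_{n}$ is a polynomial in $w_0,\dots,w_{2^A-1}$, since $n<2^A$, so $\operatorname{sgn}D_n$ depends only on $x_0,\dots,x_{A-1}$. Hence $g_{A,L}$ and every translate $g_{A,L}(\cdot+v_k)$ is $\mathcal F_A$-measurable, with Walsh spectrum in $[0,2^A)$. Expand $Q_{A,L}=\sum_{S\subset\{0,\dots,2^L-1\}}\prod_{k\in S}R_{k,A,L}$. The term indexed by $S$ equals $w_{M_S}h_S$, where $M_S:=\sum_{k\in S}2^{A+k}$ and $h_S$ is $\mathcal F_A$-measurable. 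Since $w_{M}w_j=w_{M+j}$ for $j<2^A\le$ the lowest bit of $M$, this term has spectrum in $[M_S,M_S+2^A)$. These blocks are pairwise disjoint. The only block meeting $[2^{A+k},2^{A+k}+n)$ is the one with $S=\{k\}$, because $n<2^A$. Therefore
\[
S_{2^{A+k}+n}(Q_{A,L};x)-S_{2^{A+k}}(Q_{A,L};x)=r_{A+k}(x)\,S_n\bigl(g_{A,L}(\cdot+v_k);x\bigr)=r_{A+k}(x)\,S_n(g_{A,L};x+v_k),
\]
using that $S_n$ commutes with dyadic translation.

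\emph{Step 2: the positive value.} The function $g_{A,L}$ is invariant under translation by $U_{A,L}$, since it is a sum over that group, so $S_n g_{A,L}$ is too. Also $S_ng_{A,L}$ depends only on coordinates $<A$. Hence Lemma~\ref{bg10} gives $S_n(g_{A,L};y)\ge cL$ for every $y$ whose coordinates $A-L,\dots,A-1$ vanish. For a given $x$, let $k_0$ be the index with $v_{k_0}$ equal to the block of coordinates $x_{A-L},\dots,x_{A-1}$. Then $x+v_{k_0}$ has these coordinates zero, so $|S_n(g_{A,L};x+v_{k_0})|\ge cL$. This gives the claim whenever $x_{A+k_0}=0$.

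\emph{Step 3: the sign (main obstacle, not resolved).} The statement asks for a signed bound, but $r_{A+k_0}(x)=(-1)^{x_{A+k_0}}$ is independent of the coordinates that determine $k_0$. So Step 2 alone covers only half of $G$. The one idea I would try is to find $v^*\in\widetilde U_{A,L}$ with $S_n(g_{A,L};y+v^*)\le -cL$ for all such $y$. A candidate is $v^*=e_{A-1}$, using $D_n=D_{2^{A-1}}+r_{A-1}D_{n-2^{A-1}}$ together with Lemma~\ref{paley}: translating by $e_{A-1}$ flips the sign of the second summand. This idea fails as it stands. With $k_1$ the index for which $x+v_{k_1}\in U_{A,L}+v^*$, the case $x_{A+k_0}=1$, $x_{A+k_1}=0$ gives negative values at both $k_0$ and $k_1$. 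So the second translate covers three of the four sign patterns, not all of them.

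Closing the gap would require showing that the translates $x+v_k$, over all $k$, carry enough sign-reversing structure to match every possible pattern $(x_{A+k})_k$. I do not see why that should hold. The dependence of $r_{A+k}(x)$ on $x_{A+k}$ is independent of $x_{A-L},\dots,x_{A-1}$, so as $Q_{A,L}$ is defined I expect the signed statement to fail for about half of all $x$. What the argument does establish is the absolute-value bound from Steps 1--2, together with the exact formula in Step 1.
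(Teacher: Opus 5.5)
Your Steps 1 and 2 are correct, but the gap you isolate in Step 3 cannot be closed. With $Q_{A,L}$ defined as in this paper, the signed statement is false. Your Step 1 formula already proves this, so you should present it as a disproof rather than as an expectation.

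Put $n:=n_{(A,L)}$, $\Delta_k(x):=S_{2^{A+k}+n}(Q_{A,L};x)-S_{2^{A+k}}(Q_{A,L};x)$ and $\psi_k(x):=S_n(g_{A,L};x+v_k)$. Step 1 gives $\Delta_k(x)=(-1)^{x_{A+k}}\psi_k(x)$, and each $\psi_k$ depends only on $x_0,\dots,x_{A-1}$. The coordinates $x_A,\dots,x_{A+2^L-1}$ are distinct. Each one occurs in exactly one $\Delta_k$, and none occurs in any $\psi_{k'}$. So fix any $x_0,\dots,x_{A-1}$ and choose each $x_{A+k}$ so that $(-1)^{x_{A+k}}\psi_k(x)\le 0$. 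This produces a set of measure at least $2^{-2^L}$ on which $\Delta_k(x)\le 0$ for every $0\le k<2^L$. Hence no $c>0$ makes the lemma true for every $x\in G$.

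The paper gives no argument for this statement, only a citation of \cite{BGcd}, so there is nothing to compare with; the statement as quoted is too strong. Your guess ``about half'' is unsupported, however. Conditionally on $x_0,\dots,x_{A-1}$, the set where all $\Delta_k<cL$ has probability exactly $2^{-m}$, where $m=\#\{k:|\psi_k(x)|\ge cL\}$. This can be far below $1/2$; only its positivity matters.

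What your Steps 1--2 do prove is the two-sided bound $|\Delta_{k_0}(x)|\ge cL$ for every $x$, with $k_0$ determined by $x_{A-L},\dots,x_{A-1}$. This is presumably the intended form. The same flipping also refutes Lemma~\ref{bg14} as stated, for three reasons. The blocks $[A_j,A_j+2^{L_j})$ are disjoint. The level-$j$ functions $\psi_k$ depend only on coordinates below $A_j$. By spectral separation, the level-$j$ difference for $f$ is exactly $\alpha_j$ times that for $Q_{A_j,L_j}$. So choose the coordinates of $x$ in increasing order, setting at each level $j$ the bits $x_{A_j+k}$ so that every level-$j$ difference is $\le 0$; the conclusion then fails for every $j$.

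The two-sided bound still suffices for Theorem~\ref{main}, but not through the paper's sign argument. First, replace $S_{2^{A_j+k}}(f;x)\ge 0$ by the a.e.\ boundedness of $S_{2^m}(f;x)$, which follows from martingale convergence. This gives block means over $I_{k,j}$ of modulus at least $cj-O_x(1)$. Second, suppose $\tau_N:=\frac1N\sum_{n\le N}S_{a_n}f(x)$ tended to a finite $\lambda$. The block mean equals $\lambda+\frac{N_{k,j}}{p_{k,j}}(\tau_{N_{k,j}}-\lambda)-\frac{N^-_{k,j}}{p_{k,j}}(\tau_{N^-_{k,j}}-\lambda)$, with $N_{k,j}<2p_{k,j}$. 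It would therefore also tend to $\lambda$, contradicting the lower bound on its modulus.
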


\begin{lemma}[\cite{BGcd}]\label{bg14}
Let $L_j$ be odd positive integers and let $A_j\in\mathbb N$. Assume that
\[
	A_j-L_j\to\infty,\qquad L_j\to\infty,\qquad A_{j+1}>A_j+2^{L_j},
\]
\[
	L_{j+1}\ge (j+1)2^{A_j+2^{L_j}+1}.
\]
Let $\alpha_1>0$ be fixed, and define
\[
	\alpha_{j+1}:=2^{-A_j-2^{L_j}-1}\qquad (j\ge1)
\]
and
\[
	f:=\sum_{j=1}^{\infty}\alpha_j Q_{A_j,L_j}.
\]
Then, for every $x\in G$ and every sufficiently large $j$, there exists
\[
	k=k(j,x)\in\{0,\ldots,2^{L_j}-1\}
\]
such that
\[
	S_{2^{A_j+k}+n_{(A_j,L_j)}}(f;x)-S_{2^{A_j+k}}(f;x)\ge cj.
\]
\end{lemma}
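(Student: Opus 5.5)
The plan is to split $f$ into three pieces relative to the fixed index $j$ and show that only the $j$-th summand contributes to the difference
\[
\Delta_j(h;x):=S_{2^{A_j+k}+n_{(A_j,L_j)}}(h;x)-S_{2^{A_j+k}}(h;x).
\]
First, $f\in L^1(G)$: by Lemma \ref{bg12}, $\|Q_{A_i,L_i}\|_1=1$, and $\sum_i\alpha_i<\infty$ because $\alpha_{i+1}\le 2^{-A_i}$ with $A_i\to\infty$ strictly. Since $h\mapsto\widehat h(m)$ is continuous on $L^1$ and $\Delta_j$ involves only finitely many coefficients, we get $\Delta_j(f;x)=\sum_i\alpha_i\Delta_j(Q_{A_i,L_i};x)$ pointwise. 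Thus it suffices to understand the Walsh spectrum of each $Q_{A_i,L_i}$.

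\emph{Spectral localisation of $Q_{A,L}$.} The map $x\mapsto \operatorname{sgn}D_{n_{(A,L)}}(x+u)$ depends only on $x_0,\dots,x_{A-1}$, since $|n_{(A,L)}|=A-1$. Hence $g_{A,L}$ and every translate $g_{A,L}(\cdot+v_k)$ are $\mathcal F_A$-measurable, so their spectrum lies in $[0,2^A)$. Each $R_{k,A,L}$ is $r_{A+k}$ times such a function. Expanding the product, we get
\[
Q_{A,L}=1+\sum_{\emptyset\ne K\subset\{0,\dots,2^L-1\}}\prod_{k\in K}r_{A+k}\cdot(\text{$\mathcal F_A$-measurable}).
\]
Every Walsh function appearing in a term with $K\neq\emptyset$ has an index $m$ with $2^A\le m<2^{A+2^L}$. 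Consequently:
\begin{itemize}
\item $\widehat Q_{A,L}(m)=0$ for $1\le m<2^A$;
\item $\widehat Q_{A,L}(m)=0$ for $m\ge 2^{A+2^L}$.
\end{itemize}

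\emph{The three ranges of $i$.}

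For $i<j$, the Walsh polynomial $Q_{A_i,L_i}$ has degree $<2^{A_i+2^{L_i}}<2^{A_j}\le 2^{A_j+k}$, using $A_j\ge A_{i+1}>A_i+2^{L_i}$. Both partial sums in $\Delta_j$ therefore reproduce $Q_{A_i,L_i}$, so $\Delta_j(Q_{A_i,L_i})=0$. This also covers the term with the arbitrary $\alpha_1$.

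For $i>j$, the indices used by $\Delta_j$ lie in
\[
[2^{A_j+k},\,2^{A_j+k}+n_{(A_j,L_j)})\subset[1,2^{A_j+k+1})\subset[1,2^{A_j+2^{L_j}}).
\]
Here we used $n_{(A_j,L_j)}<2^{A_j}$ and $k<2^{L_j}$. Since $A_i\ge A_{j+1}\ge A_j+2^{L_j}+1$, all these coefficients of $Q_{A_i,L_i}$ vanish by the first bullet. Hence $\Delta_j(Q_{A_i,L_i})=0$ for every $i>j$.

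For $i=j$, Lemma \ref{bg13} (applicable since $L_j<A_j$ once $A_j-L_j>0$, i.e. for large $j$) gives a $k=k(j,x)$ with $\Delta_j(Q_{A_j,L_j};x)\ge cL_j$. Therefore
\[
\Delta_j(f;x)=\alpha_j\Delta_j(Q_{A_j,L_j};x)\ge c\,\alpha_jL_j.
\]
Finally, $\alpha_j=2^{-A_{j-1}-2^{L_{j-1}}-1}$ and $L_j\ge j\,2^{A_{j-1}+2^{L_{j-1}}+1}$ give $\alpha_jL_j\ge j$, which yields the claim.

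The only step needing real care is the spectral bookkeeping. One must verify that $g_{A,L}$ is genuinely $\mathcal F_A$-measurable, because the sum over $u\in U_{A,L}$ and the translates by $v_k\in\widetilde U_{A,L}$ only touch coordinates $<A$. One must also check that the product expansion never produces a low-index Walsh function other than the constant. Given that, the gap conditions on $A_j$ make all cross terms vanish exactly, with no estimates needed.
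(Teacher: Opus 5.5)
Your proof is correct, given Lemmas \ref{bg12} and \ref{bg13}. The key fact is that $Q_{A,L}$ has Walsh spectrum in $\{0\}\cup[2^A,2^{A+2^L})$, with $\widehat{Q}_{A,L}(0)=1$. Combined with $A_{j+1}>A_j+2^{L_j}$, this makes the contribution of every $Q_{A_i,L_i}$ with $i\ne j$ to $S_{2^{A_j+k}+n_{(A_j,L_j)}}f-S_{2^{A_j+k}}f$ vanish identically, for every $k<2^{L_j}$. The inequality $\alpha_jL_j\ge j$ then turns the bound $cL_j$ from Lemma \ref{bg13} into $cj$.

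The paper does not prove this lemma; it imports it from \cite{BGcd}. The only indication it gives of how the proof goes is in the proof of Lemma \ref{A}, where Lemma \ref{bg14} is said to ``transfer'' the estimate of Lemma \ref{bg13} to $f$. Isolating the $j$-th summand, as you do, is exactly that transfer.

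Your exact-cancellation argument also shows which hypotheses do the work. The specific value $\alpha_{j+1}=2^{-A_j-2^{L_j}-1}$ matters only through $\sum_j\alpha_j<\infty$ and $\alpha_jL_j\ge j$. The hypothesis $A_j-L_j\to\infty$ is used only to guarantee $L_j<A_j$. Strictly speaking, $L_j<A_j$ is needed for every $j$, since otherwise $Q_{A_j,L_j}$, and hence $f$, is not defined.
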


\section{Results}

	\begin{lemma}\label{A}
		Let the sequences $A_j,L_j$ and the function
		$f$ be chosen as in Lemma \ref{bg14}. Then $f\in L^1(G)$ and $f\ge 0$.
		
		For every $x\in G$ and every sufficiently large $j$ there exists
		\[
			k=k(j,x)\in\{0,\ldots,2^{L_j}-1\}
		\]
		such that
		\[
			S_{2^{A_j+k}+n_{(A_j,L_j)}}(f;x)-S_{2^{A_j+k}}(f;x)\ge c j.
		\]
		
		Moreover, there exists a set $E\subset G$ with $\mu(E)=0$ such that, for every $x\in G\setminus E$, if $q_j=q_j(x)$ is any sequence of powers of two satisfying
		\[
			1\le q_j\le 2^{A_j-L_j-1}
		\]
		and $q_j\to\infty$, then, for the same $k=k(j,x)$,
		\[
			\frac1{q_j}\sum_{\ell=2^{A_j+k}+n_{(A_j,L_j)}+1}^{2^{A_j+k}+n_{(A_j,L_j)}+q_j}S_\ell(f;x)\ge cj-o(j).
		\]
	\end{lemma}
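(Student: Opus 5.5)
The first two assertions need only short arguments; the averaged inequality comes from a translation identity for Walsh functions plus Fine's theorem.

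\emph{Integrability and positivity.} By Lemma~\ref{bg12}, each $Q_{A_j,L_j}\ge 0$ and $\|Q_{A_j,L_j}\|_1=1$. All $\alpha_j>0$, and $\sum_j\alpha_j\le \alpha_1+\sum_{j\ge1}2^{-A_j-1}<\infty$ because $A_j\to\infty$ strictly. Hence the series defining $f$ converges in $L^1(G)$ and also pointwise monotonically, so $f\in L^1(G)$ and $f\ge0$.

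\emph{Jump of the partial sums.} The statement that for every $x$ and all large $j$ some $k=k(j,x)$ gives $S_{2^{A_j+k}+n_{(A_j,L_j)}}(f;x)-S_{2^{A_j+k}}(f;x)\ge cj$ is exactly Lemma~\ref{bg14}, so nothing needs proving here.

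\emph{Averaged inequality.} Fix $j$ and $k=k(j,x)$, and put $M:=2^{A_j+k}+n_{(A_j,L_j)}$.

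\begin{enumerate}
\item \emph{$M$ is divisible by $2^{A_j-L_j}$.} The lowest binary digit of $n_{(A_j,L_j)}$ is $2^{A_j-L_j}$, and $2^{A_j+k}$ is a higher power of two. So $M$ has no binary digits below position $A_j-L_j$.

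\item \emph{Translation identity.} For $0\le i<2^{A_j-L_j}$ the binary digits of $M$ and $i$ do not overlap, so $w_{M+i}=w_Mw_i$. Therefore $\widehat f(M+i)w_{M+i}=w_M\,\widehat{(fw_M)}(i)\,w_i$. Since $q_j\le 2^{A_j-L_j-1}$, this applies to every $\ell=M+i$ with $1\le i\le q_j$, and gives
\[
S_{M+i}(f;x)=S_M(f;x)+w_M(x)\,S_i(fw_M;x).
\]

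\item \emph{Averaging.} Averaging over $i=1,\dots,q_j$ yields
\[
\frac1{q_j}\sum_{\ell=M+1}^{M+q_j}S_\ell(f;x)=S_M(f;x)+w_M(x)\,\sigma_{q_j}(fw_M;x).
\]

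\item \emph{Bounding the error term.} Because $q_j$ is a power of two, \eqref{K2np} gives $K_{q_j}\ge0$. Using $|w_M|=1$ and $f\ge0$,
\[
|\sigma_{q_j}(fw_M;x)|\le\int_G f(x+t)K_{q_j}(t)\,d\mu(t)=\sigma_{q_j}(f;x).
\]

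\item \emph{The exceptional set.} Let $E$ be the null set where $\sigma_n(f;x)\not\to f(x)$ or $f(x)=\infty$; it is null by Theorem~\ref{Fine_th}. For $x\notin E$ and $q_j\to\infty$ we get $\sigma_{q_j}(f;x)\to f(x)<\infty$, so this term is $O(1)=o(j)$. The set $E$ does not depend on the choice of $(q_j)$.

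\item \emph{Lower bound for $S_M$.} By the previous part, $S_M(f;x)\ge S_{2^{A_j+k}}(f;x)+cj$. Moreover $S_{2^{A_j+k}}(f;x)=2^{A_j+k}\int_{I_{A_j+k}(x)}f\,d\mu\ge0$ by Lemma~\ref{paley} and $f\ge0$.
\end{enumerate}

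Combining these steps, the average is at least $cj-\sigma_{q_j}(f;x)=cj-o(j)$ for every $x\in G\setminus E$.

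The main point to handle carefully is the translation identity: the constraint $q_j\le 2^{A_j-L_j-1}$ is what prevents carries into the binary digits of $M$. The only other essential ingredient is the positivity of $K_{2^n}$, which is why the $q_j$ must be powers of two.
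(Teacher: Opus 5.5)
Your proposal is correct and follows essentially the same route as the paper: Lemma~\ref{bg14} for the jump, the carry-free identity $S_{M+r}(f;x)=S_M(f;x)+w_M(x)S_r(fw_M;x)$, positivity of $K_{q_j}$ to bound the error term by $\sigma_{q_j}(f;x)$, Fine's theorem for the exceptional set, and $S_{2^{A_j+k}}(f;x)\ge 0$ from Lemma~\ref{paley}. Your explicit check that $\sum_j\alpha_j<\infty$ and your inclusion of $\{f=\infty\}$ in $E$ only fill in details that the paper leaves implicit.
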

	
	\begin{proof}
		We recall how the lower bound follows from Lemmas \ref{V8}, \ref{bg10}, \ref{bg13} and \ref{bg14}. Lemma \ref{V8} gives $\|D_n\|_1\ge c\,V(n)$. In Lemma \ref{bg10} this is applied to an integer whose binary digits alternate on a
		block of length $L$. Then $V(n)\ge cL$, so
		\[
			S_{n_{(A,L)}}(g_{A,L};x)\ge cL
		\]
		for every $x\in U_{A,L}$. Then Lemma \ref{bg13} gives, for every $x$, a suitable shift $k$, and Lemma \ref{bg14} transfers this estimate to $f$. So for every $x$ and all large $j$ there is $k=k(j,x)$ such that
		\[
			S_{2^{A_j+k}+n_{(A_j,L_j)}}(f;x)-S_{2^{A_j+k}}(f;x)\ge cj.
		\]
		
		By Lemma \ref{bg12} we have $Q_{A_j,L_j}\ge0$ and $\|Q_{A_j,L_j}\|_1=1$. Since $\alpha_j>0$ and $\sum_j\alpha_j<\infty$, we get $f\ge0$ and $f\in L^1(G)$.
		
		Since $2^{A_j+k}$ is a power of two, Lemma \ref{paley} gives $D_{2^{A_j+k}}(u)=2^{A_j+k}{\bf 1}_{I_{A_j+k}}(u)$.
		Hence
		\[
			S_{2^{A_j+k}}(f;x)=\int_G f(t)\,D_{2^{A_j+k}}(x+t)\,d\mu(t)\ge0,
		\]
		because $f\ge0$. Therefore, with
		\[
			M:=2^{A_j+k}+n_{(A_j,L_j)},
		\]
		we have $S_M(f;x)\ge cj$.
		
		The number $n_{(A_j,L_j)}$ is divisible by $2^{A_j-L_j}$, and $2^{A_j+k}$ is also
		divisible by $2^{A_j-L_j}$. So $M$ is divisible by $2^{A_j-L_j}$. If $1\le r\le q$ and $q\le 2^{A_j-L_j-1}$, then $r<2^{A_j-L_j}$. Hence the addition $M+r$ is carry-free in binary representation. Consequently,
		\[
			w_{M+\nu}=w_M w_\nu \qquad (0\le \nu<r,\ 1\le r\le q),
		\]
		and therefore
		\[
			S_{M+r}(f;x)=S_M(f;x)+w_M(x)\,S_r(fw_M;x)\qquad (1\le r\le q).
		\]
		
		Averaging over $r=1,\ldots,q$ gives
		\[
			\frac1q\sum_{r=1}^{q}S_{M+r}(f;x)=S_M(f;x)+w_M(x)\,\sigma_q(fw_M;x),
		\]
		where
		\[
			\sigma_q(g):=\frac1q\sum_{r=1}^{q}S_r(g).
		\]
		Hence
		\[
			\frac1q\sum_{r=1}^{q}S_{M+r}(f;x)\ge cj-\left|\sigma_q(fw_M;x)\right|.
		\]
		
		Since $q$ is a power of two, by inequality \eqref{K2np} the Walsh--Ces\`aro kernel $K_q$ is non-negative, so
		\[
			\left|\sigma_q(fw_M;x)\right|\le \sigma_q(|f|;x).
		\]
		Let $E$ be the exceptional null set in Theorem \ref{Fine_th} applied to $|f|$. Then, for every $x\in G\setminus E$,
		\[
			\sigma_q(|f|;x)\to |f(x)|\qquad (q\to\infty).
		\]
		Hence, for every $x\in G\setminus E$, the same convergence holds along every sequence $q_j=q_j(x)\to\infty$. In particular,
		\[
			\sigma_{q_j}(|f|;x)=o(j).
		\]
		Therefore the claimed estimate follows.
	\end{proof}
	
	For $0\le k<2^{L_j}$ define
	\[
		p_{k,j}:=2^{A_j-L_j-2^{L_j}-2+k},\qquad P_{j}:=\sum_{m=1}^{j}\sum_{\nu=0}^{2^{L_{m}}-1}p_{\nu,m},\qquad	P_{0}:=0.
	\]

	\begin{lemma}\label{B}
		Assume $A_j\ge L_j+2^{L_j}+2$ so that $p_{0,j}\ge1$.
		
		Then
		\[
			\sum_{\nu=0}^{k-1}p_{\nu,j}<p_{k,j}\qquad (1\le k<2^{L_j}).
		\]
		Moreover, if
		\[
			P_{j-1}<p_{0,j},
		\]
		then
		\[
			P_{j-1}+\sum_{\nu=0}^{k-1}p_{\nu,j}<p_{k,j}\qquad (0\le k<2^{L_j}).
		\]
		Consequently, with
		\[
			P_{k,j}:=\sum_{\nu=0}^{k}p_{\nu,j},\qquad N_{k,j}:=P_{j-1}+P_{k,j},
		\]
		we have
		\[
			\frac{p_{k,j}}{N_{k,j}}>\frac12\qquad (0\le k<2^{L_j}). 
		\]
	\end{lemma}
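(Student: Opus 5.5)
The whole statement comes from the fact that the $p_{\nu,j}$, for fixed $j$, form a geometric progression with ratio $2$. Write $b:=A_j-L_j-2^{L_j}-2\ge 0$, so that $p_{\nu,j}=2^{b+\nu}$ and $p_{0,j}=2^b\ge1$. For the first claim I will sum the geometric series:
\[
\sum_{\nu=0}^{k-1}p_{\nu,j}=2^b\bigl(2^k-1\bigr)=p_{k,j}-p_{0,j}<p_{k,j},
\]
where the last inequality uses $p_{0,j}\ge1>0$. This holds for $1\le k<2^{L_j}$.

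For the second claim I will use the same identity, which for $k=0$ reads as the empty sum $0=p_{0,j}-p_{0,j}$. Combining it with the hypothesis $P_{j-1}<p_{0,j}$ gives
\[
P_{j-1}+\sum_{\nu=0}^{k-1}p_{\nu,j}=P_{j-1}+p_{k,j}-p_{0,j}<p_{k,j}
\]
for every $0\le k<2^{L_j}$.

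For the consequence, I will split off the last summand of $N_{k,j}$:
\[
N_{k,j}=\Bigl(P_{j-1}+\sum_{\nu=0}^{k-1}p_{\nu,j}\Bigr)+p_{k,j}.
\]
By the second claim the bracket is strictly less than $p_{k,j}$, so $N_{k,j}<2p_{k,j}$. Since $N_{k,j}\ge p_{k,j}>0$, we can divide, and this gives $p_{k,j}/N_{k,j}>1/2$.

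None of these steps is a real obstacle. The only points to watch are three. First, the assumption $A_j\ge L_j+2^{L_j}+2$ is what makes $b\ge0$, so each $p_{\nu,j}$ is a positive integer and $p_{0,j}\ge1$. Second, the case $k=0$ in the second claim must be read with the empty-sum convention. Third, the final ratio bound is stated unconditionally, but it depends on the hypothesis $P_{j-1}<p_{0,j}$ from the second part. I will state explicitly that the final inequality holds under that hypothesis, which will be arranged when $A_j$ is chosen large compared with $A_{j-1}+2^{L_{j-1}}$.
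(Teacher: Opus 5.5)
Your proof is correct and follows the paper's argument: write $p_{k,j}=2^k p_{0,j}$, sum the geometric series to get $p_{0,j}(2^k-1)<p_{k,j}$, add the hypothesis $P_{j-1}<p_{0,j}$, and conclude $N_{k,j}<2p_{k,j}$. You are also right that the final ratio bound depends on the hypothesis $P_{j-1}<p_{0,j}$, and the paper's proof uses it in exactly that way.
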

	
	\begin{proof}
		We have $p_{k,j}=2^k p_{0,j}$. Hence, for $1\le k<2^{L_j}$,
		\[
			\sum_{\nu=0}^{k-1}p_{\nu,j}=p_{0,j}\sum_{\nu=0}^{k-1}2^\nu=p_{0,j}(2^k-1)<	p_{0,j}2^k=p_{k,j}.
		\]
		
		Assume now that $P_{j-1}<p_{0,j}$. If $k=0$, then
		\[
			P_{j-1}+\sum_{\nu=0}^{-1}p_{\nu,j}=P_{j-1}<p_{0,j}=p_{k,j}.
		\]
		If $1\le k<2^{L_j}$, then
		\[
			P_{j-1}+\sum_{\nu=0}^{k-1}p_{\nu,j}<p_{0,j}+p_{0,j}(2^k-1)=p_{0,j}2^k=		p_{k,j}.
		\]
		Thus, for every $0\le k<2^{L_j}$,
		\[
			N_{k,j}=P_{j-1}+\sum_{\nu=0}^{k-1}p_{\nu,j}+p_{k,j}<2p_{k,j},
		\]
		and therefore
		\[
			\frac{p_{k,j}}{N_{k,j}}>\frac12.
		\]
	\end{proof}
	
	\begin{lemma}\label{C}
		Define
		\[
			I_{k,j}:=\left[2^{A_j+k}+n_{(A_j,L_j)}+1,\,	2^{A_j+k}+n_{(A_j,L_j)}+p_{k,j}\right]\cap\mathbb N.
		\]
		For fixed $j$, the intervals $I_{k,j}$ are pairwise disjoint. By choosing $A_{j+1}$ sufficiently large, the intervals belonging to different levels $j$ can also be made pairwise disjoint and increasingly ordered.
	\end{lemma}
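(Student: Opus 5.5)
We compare endpoints directly. For fixed $j$, put $M_{k,j}:=2^{A_j+k}+n_{(A_j,L_j)}$, so that $I_{k,j}=[M_{k,j}+1,\,M_{k,j}+p_{k,j}]\cap\mathbb N$.

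First we bound the pieces. Since $n_{(A_j,L_j)}\le 2^{A_j-1}+2^{A_j-3}+\cdots<2^{A_j}$, we get
\[
2^{A_j+k}<M_{k,j}<2^{A_j+k}+2^{A_j}.
\]
Also $p_{k,j}=2^{A_j-L_j-2^{L_j}-2+k}\le 2^{A_j-L_j-3}$ for $k<2^{L_j}$. In particular $p_{k,j}<2^{A_j}$, and $p_{k,j}$ is at most the range $2^{A_j-L_j-1}$ allowed in Lemma \ref{A}.

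Hence the largest element of $I_{k,j}$ is smaller than $2^{A_j+k}+2^{A_j}+2^{A_j}=2^{A_j+k}+2^{A_j+1}$.

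Next we compare consecutive intervals at the same level. For $0\le k<k+1<2^{L_j}$, the smallest element of $I_{k+1,j}$ exceeds $2^{A_j+k+1}=2^{A_j+k}+2^{A_j+k}$. Since $2^{A_j+k}\ge 2^{A_j+1}$ whenever $k\ge1$, the interval $I_{k,j}$ lies strictly to the left of $I_{k+1,j}$. The case $k=0$ needs a sharper count. There $\max I_{0,j}\le 2^{A_j}+n_{(A_j,L_j)}+p_{0,j}$, and
\[
n_{(A_j,L_j)}+p_{0,j}<2^{A_j-1}\cdot\tfrac43+2^{A_j-3}<2^{A_j},
\]
so $\max I_{0,j}<2^{A_j+1}<\min I_{1,j}$. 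Therefore the intervals at level $j$ are increasingly ordered and pairwise disjoint.

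Finally we compare different levels. Every element of level $j$ is below $2^{A_j+2^{L_j}-1}+2^{A_j+1}\le 2^{A_j+2^{L_j}+1}$. Every element of level $j+1$ exceeds $2^{A_{j+1}}$. The hypothesis $A_{j+1}>A_j+2^{L_j}$ from Lemma \ref{bg14} gives $A_{j+1}\ge A_j+2^{L_j}+1$, which already suffices. Choosing $A_{j+1}$ larger still keeps this valid, and also preserves the additional requirements $A_{j+1}\ge L_{j+1}+2^{L_{j+1}}+2$ and $P_j<p_{0,j+1}$ used in Lemma \ref{B}.

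The main obstacle is the $k=0$ versus $k=1$ comparison, where the two powers of two differ only by a factor $2$. It is resolved by the estimate $n_{(A_j,L_j)}+p_{0,j}<2^{A_j}$.
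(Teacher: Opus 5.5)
Your proof is correct and uses the same direct endpoint comparison as the paper. The paper keeps $n_{(A_j,L_j)}$ in both endpoints, so $\max I_{k,j}=2^{A_j+k}+n_{(A_j,L_j)}+p_{k,j}<2^{A_j+k+1}+n_{(A_j,L_j)}<\min I_{k+1,j}$ reduces to $p_{k,j}<2^{A_j+k}$ for every $k$ at once; this makes your separate $k=0$ case, your ``main obstacle'', unnecessary. Your explicit cross-level bound goes slightly further than the paper, which only says to choose $A_{j+1}$ large: you show that the hypothesis $A_{j+1}>A_j+2^{L_j}$ of Lemma \ref{bg14} already puts all of level $j$ below level $j+1$.
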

	
	\begin{proof}
		Let $0\le k<2^{L_j}-1$. Since $p_{k,j}\le 2^{A_j-L_j-3}<2^{A_j+k}$, we get
		\[
			\max I_{k,j}=2^{A_j+k}+n_{(A_j,L_j)}+p_{k,j}<2^{A_j+k}+n_{(A_j,L_j)}+2^{A_j+k}=	2^{A_j+k+1}+n_{(A_j,L_j)}<\min I_{k+1,j}.
		\]
		Thus the intervals are disjoint for fixed $j$.
		
		The separation between different $j$-levels follows by choosing $A_{j+1}$ so large that
		the last endpoint at level $j$ is smaller than the first point at level $j+1$. This is possible because $A_{j+1}$ may be increased without breaking the conditions	required in Lemma \ref{bg14}.
	\end{proof}
	
	\begin{remark}\label{rem1}
		The additional requirements
		\[
			A_j\ge L_j+2^{L_j}+2,\qquad	P_{j-1}<p_{0,j}
		\]
		can be imposed when the sequences $A_j,L_j$ are chosen. Indeed, after the previous levels have been fixed, $P_{j-1}$ is already fixed. We may then choose $A_j$ so large that
		\[
			2^{A_j-L_j-2^{L_j}-2}>P_{j-1}.
		\]
		This recursive choice is compatible with Lemma \ref{bg14}: after $A_j$ has been	chosen, we choose $L_{j+1}$ large enough to satisfy the condition required there, and then choose $A_{j+1}$ sufficiently large to meet the additional requirements above.
	\end{remark}
	
	\begin{theorem}\label{main}
		There exist a strictly increasing sequence of positive integers $(a_n)$
		and a function $f\in L^1(G)$ such that
		\[
			\left(\frac1N\sum_{n=1}^N S_{a_n}f(x)\right)_{N=1}^{\infty}
		\]
		diverges for almost every $x\in G$.
	\end{theorem}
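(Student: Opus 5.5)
The plan is to build $(a_n)$ as the increasing enumeration of the union of all blocks $I_{k,j}$ from Lemma~\ref{C}, over $j\ge1$ and $0\le k<2^{L_j}$. The function $f$ is the one from Lemma~\ref{bg14} and Lemma~\ref{A}, with $A_j,L_j$ chosen recursively as in Remark~\ref{rem1}. Then Lemma~\ref{B} and Lemma~\ref{C} apply: the blocks are disjoint and increasingly ordered, and block $I_{k,j}$ has exactly $p_{k,j}$ elements. We additionally let $A_j$ grow fast enough that $A_j-L_j-2^{L_j}-2\to\infty$, so that $p_{0,j}\to\infty$. This is compatible with the recursion, since $A_j$ may always be increased.

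With this ordering, the first $N_{k,j}=P_{j-1}+P_{k,j}$ terms of $(a_n)$ are exactly the blocks preceding and including $I_{k,j}$. Write
\[
T_N(x):=\frac1N\sum_{n=1}^N S_{a_n}f(x),\qquad N':=N_{k,j}-p_{k,j}.
\]
Then
\[
\frac1{p_{k,j}}\sum_{\ell\in I_{k,j}}S_\ell(f;x)=\frac{N_{k,j}T_{N_{k,j}}(x)-N'T_{N'}(x)}{p_{k,j}} .
\]

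Fix $x\in G\setminus E$, where $E$ is the null set of Lemma~\ref{A}, and suppose, for contradiction, that $T_N(x)\to s\in\mathbb R$. Then $N T_N(x)=Ns+o(N)$. By Lemma~\ref{B}, $N_{k,j}<2p_{k,j}$, so
\[
\frac{N_{k,j}T_{N_{k,j}}-N'T_{N'}}{p_{k,j}}=s+\frac{o(N_{k,j})}{p_{k,j}}=s+o(1)
\]
as $j\to\infty$, uniformly in $k$ (note $N'\le N_{k,j}$ and $N_{k,j}\to\infty$). Hence every block average stays bounded as $j\to\infty$.

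On the other hand, take $k=k(j,x)$ from Lemma~\ref{A} and $q_j:=p_{k(j,x),j}$. This $q_j$ is a power of two, satisfies
\[
q_j\le 2^{A_j-L_j-3}\le 2^{A_j-L_j-1},
\]
and $q_j\ge p_{0,j}\to\infty$. So the averaged estimate of Lemma~\ref{A} gives that the average of $S_\ell(f;x)$ over $I_{k(j,x),j}$ is at least $cj-o(j)\to\infty$. This contradicts boundedness, so $(T_N(x))$ has no finite limit for every $x\notin E$, which is almost every $x$.

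The key point, and the main potential obstacle, is that the other terms $S_{a_n}f(x)$, lying outside the good block, have no sign control. The difference-of-partial-averages argument above sidesteps this entirely. It works only because Lemma~\ref{B} makes the single good block a fixed proportion, more than $1/2$, of the whole initial segment. Two further checks are needed. First, $(a_n)$ is independent of $x$, since all $k$ are included and only the choice of which $N_{k,j}$ to inspect depends on $x$. Second, the requirement $q_j\to\infty$ in Lemma~\ref{A} is met through the growth of $p_{0,j}$.
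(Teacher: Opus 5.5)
Your proposal is correct and follows the paper's proof. It uses the same $f$, the same blocks $I_{k,j}$, Lemma~\ref{A} with $q_j=p_{k(j,x),j}$, and the bound $N_{k,j}<2p_{k,j}$ from Lemma~\ref{B}. The only difference is the last step: you rule out a finite limit through the identity $p_{k,j}\cdot(\text{block average})=N_{k,j}T_{N_{k,j}}-N'T_{N'}$, whereas the paper splits into two cases on the preceding prefix average and gets unboundedness of the means directly (which your identity also gives, since bounded $T_N(x)$ would force bounded block averages).
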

	
	\begin{proof}
		Choose the sequences $A_j,L_j$ as in Lemma \ref{bg14}, with the additional
		requirements
		\[
			A_j\ge L_j+2^{L_j}+2,\qquad	P_{j-1}<p_{0,j}.
		\]
		By Remark \ref{rem1} this is possible. Let
		\[
			f:=\sum_{j=1}^{\infty}\alpha_j Q_{A_j,L_j}.
		\]
		By Lemma \ref{A}, $f\in L^1(G)$.
		
		Let $(a_n)$ be the increasing enumeration of
		\[
			\bigcup_{j=1}^{\infty}\bigcup_{k=0}^{2^{L_j}-1} I_{k,j}.
		\]
		By Lemma \ref{C} this is a strictly increasing sequence of positive integers.
		
		Fix $x\in G\setminus E$, where $E$ is the null set obtained in Lemma \ref{A}. This exceptional set is chosen independently of $j$, $k(j,x)$, and of the subsequence $q_j$. Then the conclusion	of Lemma \ref{A} holds for this $x$ and for all sufficiently large $j$. We show that the arithmetic means cannot converge at this $x$.
		
		For $0\le k<2^{L_j}$ put
		\[
			N_{k,j}^{-}:=P_{j-1}+\sum_{\nu=0}^{k-1}p_{\nu,j},\qquad N_{k,j}:=N_{k,j}^{-}+p_{k,j}.
		\]
		So $N_{k,j}$ is the position of the last element of the block $I_{k,j}$ in the sequence $(a_n)$.
		
		For each large $j$ choose $k=k(j,x)$ as in Lemma \ref{A}. We apply Lemma \ref{A} with the dyadic length $q=p_{k,j}$. This is allowed because $p_{k,j}\le 2^{A_j-L_j-3}\le 2^{A_j-L_j-1}$. Moreover, $p_{k,j}\ge p_{0,j}$, and $p_{0,j}\to\infty$, since
		$P_{j-1}<p_{0,j}$ and $P_{j-1}\to\infty$. Thus the condition $q\to\infty$ in Lemma \ref{A} is also satisfied. Hence,
		\[
			\frac1{p_{k,j}}\sum_{\ell=2^{A_j+k}+n_{(A_j,L_j)}+1}^{2^{A_j+k}+n_{(A_j,L_j)}+p_{k,j}}S_\ell(f;x)\ge cj-o(j).
		\]
		After decreasing $c>0$ if needed, we may assume that for all large $j$,
		\[
			\frac1{p_{k,j}}\sum_{\ell=2^{A_j+k}+n_{(A_j,L_j)}+1}^{2^{A_j+k}+n_{(A_j,L_j)}+p_{k,j}}S_\ell(f;x)\ge cj.
		\]
		
		For all sufficiently large $j$, we also have $N_{k,j}^{-}>0$.	
		
		At the end of the block $I_{k,j}$, the prefix average is a weighted average of the
		previous prefix average and the present block average:
		\[
		\begin{aligned}
			\frac{1}{N_{k,j}}\sum_{n=1}^{N_{k,j}}S_{a_n}f(x)
			&=
			\frac{N_{k,j}^{-}}{N_{k,j}}\,
			\frac{1}{N_{k,j}^{-}}\sum_{n=1}^{N_{k,j}^{-}}S_{a_n}f(x)
			+
			\frac{p_{k,j}}{N_{k,j}}\,
			\frac1{p_{k,j}}
			\sum_{\ell\in I_{k,j}}S_\ell(f;x).
		\end{aligned}
		\]

		By Lemma \ref{B},
		\[
			\frac{p_{k,j}}{N_{k,j}}>\frac12,\qquad\frac{N_{k,j}^{-}}{N_{k,j}}<\frac12 .
		\]
		
		There are two cases.
		
		If
		\[
			\frac{1}{N_{k,j}^{-}}\sum_{n=1}^{N_{k,j}^{-}}S_{a_n}f(x)\le -\frac c2 j
		\]
		for infinitely many $j$, then the sequence of arithmetic means is unbounded below
		along a subsequence, hence it diverges.
		
		Otherwise, for all sufficiently large $j$,
		\[
			\frac{1}{N_{k,j}^{-}}\sum_{n=1}^{N_{k,j}^{-}}S_{a_n}f(x)>-\frac c2 j.
		\]
		For such $j$ we get
		\[
		\begin{aligned}
			\frac{1}{N_{k,j}}\sum_{n=1}^{N_{k,j}}S_{a_n}f(x)
			&>
			\frac{N_{k,j}^{-}}{N_{k,j}}\left(-\frac c2 j\right)
			+
			\frac{p_{k,j}}{N_{k,j}}cj
			>
			-\frac12\cdot\frac c2 j+\frac12 cj
			=
			\frac c4 j.
		\end{aligned}
		\]
		So $\limsup_{N\to\infty}\left|\frac1N\sum_{n=1}^{N}S_{a_n}f(x)\right|=+\infty$.
		
		In both cases the sequence of arithmetic means diverges at $x$.	Since the exceptional set in Lemma \ref{A} has measure zero, the divergence holds for almost every $x\in G$.
	\end{proof}


\begin{thebibliography}{99}
	
	\bibitem{BGcd} I. Blahota and G. G\'at,
	{\it Norm and almost everywhere convergence and divergence of matrix transform means of Walsh-Fourier series},
	J. Geom. Anal. \textbf{35}, 349 (2025).
	
	\bibitem{Belinsky1984} E.S. Belinsky,
	{\it On the summability of Fourier series with the method of lacunary arithmetic means},
	Anal. Math. \textbf{10} (1984), 275--282.
	
	\bibitem{Belinsky1997} E.S. Belinsky,
	{\it Summability of Fourier series with the method of lacunary arithmetical means at the Lebesgue points},
	Proc. Amer. Math. Soc. \textbf{125} (1997), no. 12, 3689--3693.
	
	\bibitem{Billard1967} P. Billard,
	{\it Sur la convergence presque partout des s\'eries de Fourier--Walsh des fonctions de l'espace $L^2(0,1)$},
	Studia Math. \textbf{28} (1967), 363--388.
	
	\bibitem{Carleson1966} L. Carleson,
	{\it On convergence and growth of partial sums of Fourier series},
	Acta Math. \textbf{116} (1966), 135--157.
	
	\bibitem{Carleson1983} L. Carleson,
	{\it Appendix to the paper by J.P. Kahane and Y. Katznelson, S\'eries de Fourier des fonctions born\'ees},
	in: Studies in Pure Mathematics, Birkh\"auser, Basel--Boston, Mass. (1983), 411--413.
	
	\bibitem{Fejer1904} L. Fej\'er,
	{\it Untersuchungen \"uber Fouriersche Reihen},
	Math. Ann. \textbf{58} (1904), 51--69.
	
	\bibitem{Fine} N.J. Fine,
	{\it Ces\'aro summability of Walsh-Fourier series},
	Proc. Natl. Acad. Sci. USA \textbf{41} (1955), no. 8, 588--591.
	
	\bibitem{Gat2010} G. G\'at,
	{\it Almost everywhere convergence of Fej\'er and logarithmic means of subsequences of partial sums of the Walsh--Fourier series of integrable functions},
	J. Approx. Theory \textbf{162} (2010), no. 4, 687--708.
	
	\bibitem{Gat2019} G. G\'at,
	{\it Ces\`aro means of subsequences of partial sums of trigonometric Fourier series},
	Constr. Approx. \textbf{49} (2019), no. 1, 59--101.
	
	\bibitem{Gat2019Walsh} G. G\'at,
	{\it On the convergence of Fej\'er means of some subsequences of partial sums of Walsh--Fourier series},
	Annales Univ. Sci. Budapest., Sect. Comp. \textbf{49} (2019), 187--198.
	
	\bibitem{GatTrigSubseqDivergence} G. G\'at,
	{\it Almost everywhere divergence of Ces\`aro means of subsequences of partial sums of trigonometric Fourier series}, Math. Ann. \textbf{389} (2024), 4199--4231.
	
	\bibitem{Gosselin1958} R.P. Gosselin,
	{\it On the divergence of Fourier series},
	Proc. Amer. Math. Soc. \textbf{9} (1958), 278--282.
	
	\bibitem{Hunt1968} R.A. Hunt,
	{\it On the convergence of Fourier series},
	in: Orthogonal Expansions and their Continuous Analogues,
	Southern Illinois University Press, Carbondale, Ill. (1968), 235--255.
	
	\bibitem{KahaneKatznelson1983} J.-P. Kahane and Y. Katznelson,
	{\it S\'eries de Fourier des fonctions born\'ees},
	in: Studies in Pure Mathematics, Birkh\"auser, Basel--Boston, Mass. (1983), 395--410.
	
	\bibitem{Kolmogorov1923} A.N. Kolmogoroff,
	{\it Une s\'erie de Fourier--Lebesgue divergente presque partout},
	Fund. Math. \textbf{4} (1923), 324--328.
	
	\bibitem{Kolmogorov1926} A.N. Kolmogoroff,
	{\it Une s\'erie de Fourier--Lebesgue divergente partout},
	C. R. Acad. Sci. Paris \textbf{183} (1926), 1327--1329.
	
	\bibitem{Konyagin1993} S.V. Konyagin,
	{\it The Fourier--Walsh subsequence of partial sums},
	Math. Notes \textbf{54} (1993), no. 3--4, 1026--1030.
	
	\bibitem{Lebesgue1905} H. Lebesgue,
	{\it Recherches sur la convergence des s\'eries de Fourier},
	Math. Ann. \textbf{61} (1905), 251--280.
	
	\bibitem{Paley} R.E.A.C. Paley,
	{\it A remarkable series of orthogonal functions (I)},
	Proc. London Math. Soc. \textbf{34} (1932), 241--264.
	
	\bibitem{Salem1955} R. Salem,
	{\it On strong summability of Fourier series},
	Amer. J. Math. \textbf{77} (1955), 393--403.
	
	\bibitem{SWSP} F. Schipp, W.R. Wade, P. Simon, and J. P\'al,
	{\it Walsh Series. An Introduction to Dyadic Harmonic Analysis},
	Adam Hilger, Bristol--New York (1990).
	

	\bibitem{Totik1982} V. Totik,
	{\it On the divergence of Fourier-series},
	Publ. Math. Debrecen \textbf{29} (1982), no. 3--4, 251--264.
	
	\bibitem{Zalcwasser1936} Z. Zalcwasser,
	{\it Sur la sommabilit\'e des s\'eries de Fourier},
	Stud. Math. \textbf{6} (1936), 82--88.
	

	
\end{thebibliography}
\end{document}